\def\sup{\mathop{\rm sup}}
\def\Real{\mathop{\rm Re}}
\newtheorem{theorem}{Theorem}[section]
\newtheorem{corrollary}[theorem]{Corrollary}
\newtheorem{remark}[theorem]{Remark}
\begin{document}

\title[]{Stability results of some abstract evolution equations}

\author{N. S. Hoang}

\address{Mathematics Department, University of West Georgia,
Carrollton, GA 30116, USA}
\email{nhoang@westga.edu}

\subjclass[2000]{34G20, 37L05, 44J05, 47J35.}

\date{}

\keywords{Evolution equations, stability, Lyapunov stable, asymptotically stable}

\begin{abstract}
The stability of the solution to the equation $\dot{u} = A(t)u + G(t,u)+f(t)$, $t\ge 0$, $u(0)=u_0$ is studied. 
Here $A(t)$ is a linear operator in a Hilbert space $H$ and $G(t,u)$ is a nonlinear operator in $H$ for any fixed $t\ge 0$. We assume that $\|G(t,u)\|\le \alpha(t)\|u\|^p$, $p>1$, and the spectrum of $A(t)$ lies in the half-plane $\Real \lambda \le \gamma(t)$ where $\gamma(t)$ can take positive and negative values. We proved that the equilibrium solution $u=0$ to the equation is Lyapunov stable under 
persistantly acting 
perturbations $f(t)$ if $\sup_{t\ge 0}\int_0^t \gamma(\xi)\, d\xi <\infty$ and $\int_0^\infty \alpha(\xi)\, d\xi<\infty$. In addition, if $\int_0^t \gamma(\xi)\, d\xi \to -\infty$ as $t\to\infty$, then we proved that the equilibrium solution $u=0$ is asymptotically stable under 
persistantly acting 
perturbations $f(t)$. Sufficient conditions for the solution $u(t)$ to be bounded and for $\lim_{t\to\infty}u(t) = 0$ are proposed and justified.
\end{abstract}
\maketitle

\pagestyle{plain}

\section{Introduction}
Consider the equation
\begin{equation}
\label{eq1}
\dot{u} = A(t)u + G(t,u) + f(t),\quad t\ge 0,\quad u(0)= u_0,\quad \dot{u} := \frac{d u}{dt}.
\end{equation}
Here, $u(t)$ is a function of $t\ge 0$ with values in a Hilbert space $H$, $A(t):H\to H$ is a linear, closed, and densely defined operator in $H$, 
\begin{equation}
\label{ae4}
\Real \langle u, A(t)u\rangle \le \gamma(t)\|u\|^2, \qquad t\ge 0,
\end{equation}
$G(t,u)$ is a nonlinear operator in $H$ for any fixed $t\ge 0$,
\begin{equation}
\label{ae5}
\|G(t,u)\| \le \alpha(t) \|u\|^p,\qquad p>1,\quad t\ge 0,
\end{equation}
and $f(t)$ is a function on $\mathbb{R}_+=[0,\infty)$ with values in $H$,
\begin{equation}
\label{ae6}
\|f(t)\| \le \beta(t),\qquad t\ge 0.
\end{equation}
Note that inequality \eqref{ae5} implies that $G(t,0) = 0$. Thus, $u=0$ is an equilibrium solution to the equation 
$$
\dot{u} = A(t)u + G(t,u),\qquad t\ge 0.
$$
It is assumed that $\alpha(t)$, $\beta(t)$, and $\gamma(t)$ in inequalities \eqref{ae4}--\eqref{ae6} are in $L^1_{loc}([0,\infty))$ and that $\alpha(t)$ and $\beta(t)$ are nonnegative on $[0,\infty)$.  
Also, we assume that equation \eqref{eq1} has a unique local solution. 
A stronger assumption on the local existence of equation \eqref{eq1} is made in Assumption A below. By a solution to problem \eqref{eq1} we mean a classical solution. 
Specifically, a global solution to \eqref{eq1} is a continuous differentiable function $u:[0,\infty)\to H$ which satisfies equation \eqref{eq1}. A local solution to equation \eqref{eq1} 
is a continuous differentiable function $u:[0,T)\to H$, for some $T>0$, which solves equation \eqref{eq1}. Thus, the solution space for global existence is $C^1([0,\infty);H)$ and for local existence is $C^1([0,T);H)$. 
Recall that a local solution to problem \eqref{eq1} exists and is unique if $A(t)$ is a generator of a $C_0$-semigroup. 

Take inner product of both sides of equation \eqref{eq1} with $u$ to get
$$
\langle u, \dot{u}\rangle = \langle u, A(t)u \rangle + \langle u, G(t,u)\rangle + \langle u, f(t)\rangle,\qquad t\ge 0. 
$$
Denote $g(t) := \|u(t)\|$, take the real part of the equation above, and use the triangle inequality to get
\begin{equation*}
\begin{split}
\dot{g}(t)g(t) &\le \Real \langle u, A(t)u \rangle + |\langle u, G(t,u)\rangle| + \beta(t)g(t),\\
&\le \gamma(t)g^2(t) + \alpha(t)g^{p+1}(t) + \beta(t)g(t), \qquad t\ge 0.
\end{split}
\end{equation*}
This implies
\begin{equation}
\label{eq2}
\dot{g} \le \gamma(t)g(t) + \alpha(t)g^p(t) + \beta(t),\qquad t\ge 0,\quad g(0) = \|u_0\|.
\end{equation}
Note that in inequality \eqref{eq2} the functions $\alpha(t)$ and $\beta(t)$
are non negative on $\mathbb{R}_+$.

The stability of solutions to equation \eqref{eq1} has been studied in the literature (see, e.g., \cite{C}, \cite{CL}, \cite{D}, and \cite{L}). Stability of solutions of abstract equations in Banach and Hilbert spaces was studied in \cite{DK}, \cite{H}, and \cite{T}. In \cite{R1} stability of solutions of abstract equations in Hilbert spaces was studied using nonlinear inequalities. In \cite{R2}--\cite{R5} stability of the solution to equation \eqref{eq1} was studied using nonlinear inequalities under the assumption that the spectrum of $A(t)$ lie in the half-plane $\Real \lambda \le \gamma(t)$ where $0>\gamma(t)\to 0$ as $t\to\infty$ (see \cite{R2} and \cite{R3}) or $0<\gamma(t)\to 0$ as $t\to\infty$ (see \cite{R4}). 
In \cite{R6} stability of solutions to abstract evolution equations with delay was studied. 

The classical stability result of equation \eqref{eq1} states that if $A(t) \equiv A$ a constant matrix whose eigenvalues lie in the half-plane 
$\Real \lambda < \sigma_0 < 0$, and $\alpha(t)$ and $f(t)$ are identically equal to zero, then the solution to problem \eqref{eq1} exists globally, is unique, and is asymptotically stable. If the matrix $A$ has an eigenvalue in the half-plane 
$\Real(\lambda)>0$, then, in general, $\lim_{t\to\infty}u(t) = \infty$. 

In this paper we study the stability of the solution to equation \eqref{eq1} under a more relaxed condition on the spectrum of $A(t)$ than those used in the literature. Namely, we allow the spectrum of $A(t)$ to lie in the half-plane $\Real (\lambda) \le \gamma(t)$, where $\gamma(t)$ can take positive and negative values. In \cite{R2}--\cite{R4} it was assumed either $\gamma(t) >0$ or $\gamma(t) < 0$ on $\mathbb{R}_+$. 
We give sufficient conditions on the functions $\alpha(t)$, $\beta(t)$, and $\gamma(t)$ which yield stability properties of the solution to equation \eqref{eq1}. 

The novelty of the stability results in this paper compared to those in \cite{R2}--\cite{R5} is: Our results do not require to find a function $\mu(t)>0$ which solves a nonlinear inequality as those in \cite{R2}--\cite{R4}. In particular, our results are applicable for the case when $\gamma(t) = \sin t$ (or  $\gamma(t) = \frac{\sin t }{(t+1)^a}, 0<a<1$) and $\alpha(t)$ is a measure, positive, and intergrable function on $\mathbb{R}_+$.  These cases are not easy to treat using the results in \cite{R2}--\cite{R4} as it is not easy to find functions $\mu(t)$ which solve nonlinear inequalities in \cite{R2}--\cite{R4} for general $\gamma(t)$, $\alpha(t)$, and $\beta(t)$.    The conditions on $\alpha(t)$ and $\gamma(t)$ in Theorem \ref{thm2.1} in this paper are also more relaxed than those in Theorem 2 in \cite{R4}. 
Specifically, in Theorem \ref{thm2.1} we proved that if $\sup_{t\ge 0}\int_0^t \gamma(\xi)\, d\xi <\infty$ and $\int_0^\infty \alpha(\xi)\, d\xi < \infty$, then the equilibrium solution to problem \eqref{eq1} is Lyapunov stable under persistently acting perturbations. 
In Theorem 2 in \cite{R4} it is required that $\gamma(t)>0$ and that $\int_0^\infty [\gamma(\xi) + \alpha(\xi)]\,d\xi$ is not `large' to get the same stability. 
Other results in this paper are Theorem \ref{thm2.3} and Theorem \ref{thm2.7} in which we give sufficient conditions for the solution to problem \eqref{eq1} to be asymptotically stable. The rate of decay of the solution to problem \eqref{eq1} of exponential type is given in Theorem \ref{thm2.3} and Corrollary \ref{corrollary2.5}. 

Throughout the paper, we assume that the following assumption holds.

\vskip 5pt
{\bf Assumption A.} The equation 
$$
\dot{u} = A(t)u + G(t,u) + f(t),\quad t\ge t_0,\quad u(t_0)= \tilde{u}_0,\quad \dot{u} := \frac{d u}{dt}
$$
where $A(t)$, $G(t,u)$, and $f(t)$ are defined as earlier 
has a unique local solution for any $t_0\ge 0$ and $\tilde{u}_0\in H$. 

\section{Main results}

\begin{theorem}
\label{thm2.1}
Assume that 
\begin{equation}
\label{eq3.1}
M :=\sup_{t\ge 0}\int_0^t \gamma(\xi)\, d\xi  <\infty,\qquad \int_0^\infty \alpha(t)\, dt <\infty.
\end{equation}
Then the equilibrium solution $u=0$ to problem \eqref{eq1} is Lyapunov stable under persistently acting perturbations $f(t)$. 
\end{theorem}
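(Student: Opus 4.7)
The plan is a standard integrating-factor reduction applied to the scalar inequality \eqref{eq2}, followed by a bootstrap argument to neutralize the superlinear term $\alpha g^p$. Set $G(t) := \int_0^t \gamma(\xi)\, d\xi$ (so $G(t)\le M$) and define $h(t) := g(t) e^{-G(t)}$. Differentiating and using \eqref{eq2},
\[
\dot h(t) \le \alpha(t) e^{(p-1)G(t)} h^p(t) + e^{-G(t)} \beta(t), \qquad h(0)=\|u_0\|.
\]
Because $G\le M$, one has $e^{(p-1)G(t)} \le e^{(p-1)M}$ and $g(t) = h(t)e^{G(t)} \le e^{M} h(t)$, so controlling $h$ controls $g$. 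Integrating,
\[
h(t) \le h(0) + e^{(p-1)M} \int_0^t \alpha(s) h^p(s)\, ds + \int_0^t e^{-G(s)} \beta(s)\, ds.
\]

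Given $\epsilon>0$, set $R := e^{-M}\epsilon$ so that $h(t)\le R$ implies $g(t)\le \epsilon$. Assume the bootstrap hypothesis $h(t)\le R$ on a maximal half-open interval $[0,\tau)$ (non-empty by Assumption A once $h(0)<R$). Under this hypothesis $h^p \le R^{p-1} h$, the nonlinearity linearizes, and the classical Gronwall lemma yields
\[
h(t) \le \left( h(0) + \int_0^t e^{-G(s)} \beta(s)\, ds \right) \exp\!\left( e^{(p-1)M} R^{p-1} \int_0^t \alpha(s)\, ds \right).
\]
Since $\int_0^\infty \alpha < \infty$, the exponential factor is bounded by a constant $K=K(\epsilon,M,\int_0^\infty \alpha)$ independent of $t$ and of the particular solution.

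Choose $\delta>0$ small enough that $\|u_0\|<\delta$ and the smallness of the persistent perturbation force $h(0) + \int_0^\infty e^{-G(s)} \beta(s)\, ds < R/K$. Then the above estimate gives $h(t)<R$ strictly on $[0,\tau)$, and a standard continuity step closes the bootstrap: if $\tau$ were finite, Assumption A together with continuity of $h$ would force $h(\tau)=R$, contradicting the strict inequality $h(\tau^-)<R$. Therefore $\tau=\infty$ and $g(t) = h(t) e^{G(t)} < R e^{M} = \epsilon$ for every $t\ge 0$, which is Lyapunov stability under persistent perturbations. The main technical hurdle is the weighted integral $\int_0^\infty e^{-G(s)} \beta(s)\, ds$: its smallness is what the phrase ``persistent perturbation'' must amount to here, since without such weighting the factor $e^{-G(s)}$ may blow up along subsequences where $G(s)\to -\infty$; one has to match this weighted condition to whatever notion of persistent perturbation the paper adopts.
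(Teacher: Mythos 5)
Your proof is correct and is essentially the paper's own argument: both use the integrating factor $e^{-\int_0^t \gamma(\xi)\,d\xi}$, linearize the superlinear term $\alpha g^p$ via the a priori $\epsilon$-bound supplied by a continuity/bootstrap step, and require smallness of a weighted integral of $\beta$. The only cosmetic differences are that the paper absorbs the linearized $\alpha$-term into the integrating factor $\mu(t)=e^{-\int_0^t[\gamma(\xi)+\epsilon^{p-1}\alpha(\xi)]\,d\xi}$ rather than invoking Gronwall, and its smallness hypothesis \eqref{eq6} on $f$ is exactly the weighted condition you flag in your last paragraph, so that concern is resolved the same way in the paper itself.
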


\begin{remark}{\rm 
The term $f(t)$ in equation \eqref{eq1} is called {\it persistently acting perturbations}.
{\it `Stable under persistently acting perturbations $f(t)$'} means that given any $\epsilon>0$ arbitrarily small, if $\|f(t)\|$ is sufficiently small, then there exists $\delta>0$ such that if $\|u(0)\| <\delta$ then $\|u(t)\|<\epsilon$ for all $t\ge 0$. 

The first condition in \eqref{eq3.1} is necessary for the solution to equation \eqref{eq1} to be bounded, in general. Indeed, if the first condition in \eqref{eq3.1} does not hold, then the function $v(t) := u_0e^{\int_0^t \gamma(\xi)\, d\xi}$ is unbounded and solves the equation $\dot{u} = \gamma(t)u$, $t\ge 0$, $u(0)=u_0\not=0$. This initial value problem is a special case of equation \eqref{eq1} when $A(t)u = \gamma(t)u$, $G(t,u)\equiv 0$, and $f(t)\equiv 0$. 
}
\end{remark}

\begin{proof}[Proof of Theorem \ref{thm2.1}]
Let $\epsilon>0$ be arbitrarily small. Define 
\begin{equation}
\mu(t) := e^{-\int_0^t [\gamma(\xi) + \epsilon^{p-1}\alpha(\xi)]\, d\xi},\qquad t\ge 0. 
\end{equation}
Then
\begin{equation}
\label{eqs1}
\mu(t) \ge e^{-M_1},\quad t\ge 0,\qquad M_1:= M + \epsilon^{p-1}\int_0^\infty \alpha(\xi)\, d\xi.
\end{equation}
Choose $\delta>0$ sufficiently small such that
\begin{equation}
\label{eq5}
\delta e^{M_1} < \frac{\epsilon}{3}.
\end{equation}

{\it Let us prove that if $0\le g(0)=\|u_0\|<\delta$ and $\beta(t) = \|f(t)\|$ is sufficiently small, then $\|u(t)\|<\epsilon$ for all $t\ge 0$.} 

Let $T>0$ be the largest real value such that
\begin{equation}
\label{eqa1}
g(t) = \|u(t)\| \le \epsilon,\qquad \forall t\in [0,T].
\end{equation}
We claim that $T=\infty$. Assume the contrary. 
Thus, $T$ is finite and, by the continuity of $g(t)$,
\begin{equation}
\label{eqs2}
g(T) = \|u(T)\| = \epsilon.
\end{equation}
Choose $f(t)$ such that the function $\beta(t) = \|f(t)\|$ satisfies the inequality 
\begin{equation}
\label{eq6}
\frac{\int_0^t \beta(\xi)\mu(\xi)\, d\xi}{\mu(t)} < \frac{\epsilon}{3},\qquad\mu(t) = e^{-\int_0^t (\gamma(\xi) + \epsilon^{p-1}\alpha(\xi))\, d\xi}.
\end{equation}
Inequality \eqref{eq6} holds true if $\|f(t)\|=\beta(t)$ is sufficiently small. 
It follows from inequalities \eqref{eq2} and \eqref{eqa1} that
$$
\dot{g} \le \gamma(t)g(t) + \alpha(t)\epsilon^{p-1}g(t) + \beta(t),\qquad 0\le t\le T.
$$
This implies
\begin{equation}
\label{eqa2}
\begin{split}
\frac{d}{dt}\big(g(t)\mu(t)\big) \le \beta(t)\mu(t),\qquad \mu(t) = e^{-\int_0^t (\gamma(\xi) + \epsilon^{p-1}\alpha(\xi))\,d\xi},\qquad 0\le t\le T.
\end{split}
\end{equation}
Integrate inequality \eqref{eqa2} from $0$ to $t$ to get
$$
g(t)\mu(t) - g(0)\mu(0) \le \int_0^t \beta(\xi)\mu(\xi)\, d\xi,\qquad 0\le t\le T.
$$
This, inequality \eqref{eqs1}, and inequality \eqref{eq6} imply
\begin{equation}
\label{eq8}
\begin{split}
g(t) \le \frac{g(0)}{\mu(t)} + \frac{\int_0^t \beta(\xi)\mu(\xi)\,d\xi}{\mu(t)} \le g(0)e^{M_1} + \frac{\epsilon}{3},\qquad \forall t\in [0,T].
\end{split}
\end{equation}
It follows from inequalities \eqref{eq5} and \eqref{eq8} and the inequality $g(0)<\delta$ that
\begin{equation}
\label{}
g(t) \le \delta e^{M_1} + \frac{\epsilon}{3} \le \frac{2\epsilon}{3},\qquad \forall t\in [0,T].
\end{equation}
This implies $g(T) \le \frac{2\epsilon}{3}$ which contradicts to relation \eqref{eqs2}. 
This contradiction implies that $T=\infty$, i.e.,
$$
\|u(t)\| = g(t) \le \epsilon,\qquad \forall t\ge 0.
$$
Thus, the equilibrium solution $u=0$ is Lyapunov stable under persistently acting perturbations $f(t)$. 
Theorem \ref{thm2.1} is proved. 
\end{proof}

\begin{theorem}
\label{thm2.3}
Assume that 
\begin{equation}
\label{eq10}
M := \sup_{t\ge 0}\int_0^t \gamma(\xi)\, d\xi  < \infty,
\end{equation}
\begin{equation}
\label{eq11}
\frac{1}{(g(0) + \omega)^{p-1}} > (p-1)\int_0^\infty \frac{\alpha(\xi)}{\nu^{p-1}(\xi)}\,d\xi,\qquad \omega = const>0,\quad 
\nu(t) = e^{-\int_0^t \gamma(\xi)\, d\xi}. 
\end{equation}
If $\beta(t)=\|f(t)\|$ satisfies the inequality
\begin{equation}
\label{eqs6}
\frac{\beta(t)\nu^p(t)}{\alpha(t)} \le \omega^p,\qquad t\ge 0, 
\end{equation}
then the solution $u(t)$ to problem \eqref{eq1} exists globally, is bounded, and satisfies
\begin{equation}
\label{eqa7}
\|u(t)\| \le C_2 e^{\int_0^t \gamma(\xi)\, d\xi},\qquad t\ge 0,\quad C_2 = const>0.
\end{equation}
Moreover, if
\begin{equation}
\label{eq12.0}
\lim_{t\to\infty}\int_0^t \gamma(\xi)\, d\xi = -\infty,
\end{equation}
then
\begin{equation}
\label{eq12}
\lim_{t\to\infty} u(t) = 0.
\end{equation}
\end{theorem}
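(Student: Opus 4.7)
The plan is to remove the linear term $\gamma(t)g(t)$ from inequality \eqref{eq2} by the integrating factor $\nu(t)=e^{-\int_0^t\gamma(\xi)\,d\xi}$. Setting $h(t):=\nu(t)g(t)$ and differentiating, inequality \eqref{eq2} transforms into
$$
\dot h(t)\le \frac{\alpha(t)}{\nu^{p-1}(t)}h(t)^p+\beta(t)\nu(t),\qquad h(0)=g(0).
$$
This reduces the question to a scalar Bernoulli-type inequality for $h$.

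The crucial step is to absorb the forcing $\beta(t)\nu(t)$ into the nonlinear term. Hypothesis \eqref{eqs6} is equivalent to $\beta(t)\nu(t)\le \omega^p\alpha(t)/\nu^{p-1}(t)$, and the elementary convexity estimate $a^p+b^p\le (a+b)^p$, valid for $a,b\ge 0$ and $p\ge 1$, then yields
$$
\dot h(t)\le \frac{\alpha(t)}{\nu^{p-1}(t)}\bigl(h(t)+\omega\bigr)^p.
$$
Writing $y(t):=h(t)+\omega>0$, this is the separable inequality $\dot y/y^p\le \alpha/\nu^{p-1}$; integrating from $0$ to $t$ gives
$$
\frac{1}{y(t)^{p-1}}\ge \frac{1}{(g(0)+\omega)^{p-1}}-(p-1)\int_0^t\frac{\alpha(\xi)}{\nu^{p-1}(\xi)}\,d\xi.
$$
Hypothesis \eqref{eq11} ensures that the right-hand side is bounded below by some constant $c>0$ uniformly in $t\ge 0$, so $y(t)\le c^{-1/(p-1)}$, whence $h(t)$ is uniformly bounded by a constant $C_2\ge 0$. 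Converting back via $g(t)=h(t)/\nu(t)$ gives precisely \eqref{eqa7}.

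Global existence on $[0,\infty)$ then comes for free from this a priori bound by a standard continuation argument: Assumption A furnishes a maximal local solution on some $[0,T^*)$, and if $T^*<\infty$ the bound above shows $\|u(t)\|=g(t)$ remains bounded as $t\to T^*$, contradicting maximality. The final claim \eqref{eq12} is immediate from \eqref{eqa7} together with \eqref{eq12.0}.

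The main conceptual obstacle is the reshuffling in the second paragraph: without using \eqref{eqs6} to trade the forcing $\beta\nu$ for a multiple of $\alpha/\nu^{p-1}$ and then applying $h^p+\omega^p\le(h+\omega)^p$, the inequality for $h$ has two independent right-hand terms and is not separable. The constant $\omega>0$ is precisely the slack one introduces in order to make this trade possible; that is why \eqref{eq11} has to be written in terms of $g(0)+\omega$ rather than $g(0)$ alone.
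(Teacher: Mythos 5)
Your proposal is correct and follows essentially the same route as the paper's proof: the integrating factor $\nu(t)$, the absorption of $\beta(t)\nu(t)$ into the nonlinear term via \eqref{eqs6} and the inequality $a^p+b^p\le(a+b)^p$, the integration of the resulting separable Bernoulli inequality with \eqref{eq11} keeping the denominator positive, and a continuation argument from Assumption A for global existence. The only cosmetic difference is that the paper runs the continuation argument first on the maximal interval $[0,T)$ and extracts the bound there, and it invokes \eqref{eq10} explicitly to pass from \eqref{eqa7} to boundedness of $u$.
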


\begin{remark}{\rm
Inequality \eqref{eq11} is a natural assumption. If inequality \eqref{eq11} does not hold for any $\omega \ge 0$, then the solution $g(t)$ to inequality \eqref{eq2} may blow up at a finite time even for the case when $f(t)\equiv 0$. For example, one can verify that the solution to the equation
$$
\dot{g} = \gamma(t)g(t) + \alpha(t)g^p(t),\qquad t\ge 0,\qquad g(0) = g_0,
$$
is
$$
g(t)=\tilde{g}(t) := \frac{1}{\nu(t)}\bigg(\frac{1}{g^{1-p}_0 - (p-1) \int_0^t \frac{\alpha(\xi)}{\nu^{p-1}(\xi)}\, d\xi}\bigg)^{\frac{1}{p-1}}.
$$
The function $\tilde{g}(t)$ blows up at a finite time $t=t_0$ if $t_0$ is the solution to the equation 
$$
0=\frac{1}{g^{p-1}_0} - (p-1) \int_0^t \frac{\alpha(\xi)}{\nu^{p-1}(\xi)}\, d\xi.
$$
This equation has a solution $t_0>0$ if
$$
\frac{1}{g^{p-1}_0} < (p-1) \int_0^\infty \frac{\alpha(\xi)}{\nu^{p-1}(\xi)}\, d\xi.
$$
If $g(t)$ blows up at a finite time, then the solution $u(t)$ to equation \eqref{eq1} blows up at a finite time as well due to the relation $g(t) = \|u(t)\|$. 

Inequality \eqref{eqs6} holds if $\beta(t) = \|f(t)\|$ is sufficiently small. 
Relation \eqref{eq12} implies, under assumptions \eqref{eq11} and \eqref{eq12.0}, that the equilibrium solution $u=0$ to problem \eqref{eq1} is asymptotically stable under persistantly acting perturbations $f(t)$. 
}
\end{remark}

\begin{proof}[Proof of Theorem \ref{thm2.3}]
{\it Let us first show that the solution $u(t)$ to problem \eqref{eq1} exists globally.} Assume the contrary. Thus, there exists a finite number $T>0$ such that the maximal interval of existence of $u(t)$ is $[0,T)$. 
Inequality \eqref{eq2} is equivalent to 
\begin{equation}
\label{eq3}
\frac{d}{dt}\big(g(t)\nu(t)\big) \le \alpha(t)g^p(t)\nu(t) + \beta(t)\nu(t),\qquad  0\le t< T,\quad \nu(t) := e^{-\int_0^t \gamma(\xi)\, d\xi}.
\end{equation}
Inequalities \eqref{eq3} and \eqref{eqs6} imply
\begin{equation}
\label{eqa4}
\begin{split}
\frac{d}{dt}\big(g(t)\nu(t)\big) &\le \frac{\alpha(t)}{\nu^{p-1}(t)}(g(t)\nu(t))^p + \beta(t)\nu(t)\\
&= \frac{\alpha(t)}{\nu^{p-1}(t)}\bigg[ (g(t)\nu(t))^p + \frac{\beta(t)\nu^p(t)}{\alpha(t)} \bigg] \\
&\le \frac{\alpha(t)}{\nu^{p-1}(t)}\bigg[ (g(t)\nu(t))^p + \omega^p \bigg]\\
&\le \frac{\alpha(t)}{\nu^{p-1}(t)}\bigg( g(t)\nu(t)+ \omega\bigg)^p,\qquad 0\le t< T,\qquad p>1. 
\end{split}
\end{equation}
Here we have used the inequality $a^p + b^p \le (a+b)^p$, $a,b\ge 0$, $p>1$. 
Inequality \eqref{eqa4} can be rewritten as
$$
\frac{d}{dt}\bigg( \frac{\big[g(t)\nu(t)+ \omega\big]^{1-p} }{1-p} \bigg) \le \frac{\alpha(t)}{\nu^{p-1}(t)},\qquad 0\le t< T.
$$
Integrate this inequality from 0 to $t$ to get
\begin{equation}
\label{}
\frac{\big[g(t)\nu(t) + \omega\big]^{1-p} - \big[g(0) + \omega\big]^{1-p}}{1-p}  
\le \int_0^t \frac{\alpha(\xi)}{\nu^{p-1}(\xi)}\, d\xi,\qquad 0\le t< T. 
\end{equation}
Therefore,
\begin{equation}
\label{eqx20}
\big[g(t)\nu(t) + \omega\big]^{p-1}  \le \frac{1}{(g(0) + \omega)^{1-p} - (p-1) \int_0^t \frac{\alpha(\xi)}{\nu^{p-1}(\xi)}\, d\xi},\qquad 0\le t<T. 
\end{equation}
Inequality \eqref{eq11} implies that the right-hand side of \eqref{eqx20} is well-defined for all $t\ge 0$. 
Thus, from \eqref{eqx20} one gets
\begin{equation}
\label{eq18}
\big[g(t)\nu(t) + \omega\big]^{p-1}  \le \frac{1}{(g(0) + \omega)^{1-p} - (p-1) \int_0^\infty \frac{\alpha(\xi)}{\nu^{p-1}(\xi)}\, d\xi}:=M_3,\qquad 0\le t< T. 
\end{equation}
It follows from relation \eqref{eq10} that 
$$
\nu(t) = e^{-\int_0^t \gamma(\xi)\, d\xi} \ge e^{-M},\qquad 0\le t< T. 
$$ 
This and inequality \eqref{eq18} imply that
\begin{equation}
\label{eq19}
g(t) \le \frac{M_3^{\frac{1}{p-1}} - \omega}{\nu(t)} \le e^{M} (M_3^{\frac{1}{p-1}} - \omega),\qquad 0\le t< T. 
\end{equation}
This and the continuity of $u(t)$ imply that $\|u(T)\|$ is finite and $u(t)$ exists on $[0,T]$. This and Assumption A imply that the existence of the solution $u(t)$ to equation \eqref{eq1} can be extended to a larger interval, namely, $[0,T+\delta)$ for some $\delta>0$. This contradicts the definition of $T$. The contradiction implies that $T=\infty$, i.e., $u(t)$ exists globally. 
The boundedness of $u(t)$ follows directly from inequality \eqref{eq19} with $T=\infty$. 

{\it Let us prove \eqref{eq12} assuming that \eqref{eq12.0} holds.}
Let $C_2 := M_3^{\frac{1}{p-1}} - \omega$. Then inequality \eqref{eqa7} follows from the first inequality in \eqref{eq19} and the relations $g(t) = \|u(t)\|$ and $\nu(t) = e^{-\int_0^t \gamma(\xi)\,d\xi}$. 
If relation \eqref{eq12.0} holds, then $\nu(t)=e^{-\int_0^t \gamma(\xi)\, d\xi}\to \infty$ as $t\to \infty$. This and inequality \eqref{eqa7} imply \eqref{eq12}. 
This completes the proof of Theorem \ref{thm2.3}. 
\end{proof}

Consider the following inequality:
\begin{equation}
\label{eqs5}
\frac{\beta(t)\nu^p(t)}{\alpha(t)} \le C < \bigg(\frac{1}{(p-1)\int_0^\infty \frac{\alpha(\xi)}{\nu^{p-1}(\xi)}d\xi}\bigg)^{\frac{p}{p-1}},\qquad t\ge 0,\quad C > 0,\quad p>1. 
\end{equation}
Let $\omega_0 := C^{\frac{1}{p}}$, i.e., $\omega_0^p = C$. Then it follows from the first inequality in \eqref{eqs5} that inequality \eqref{eqs6} holds for $\omega=\omega_0$. From the second inequality in \eqref{eqs5} and the relation $C = \omega_0^p$, one gets
$$
w_0^p < \bigg(\frac{1}{(p-1)\int_0^\infty \frac{\alpha(\xi)}{\nu^{p-1}(\xi)}d\xi}\bigg)^{\frac{p}{p-1}}.
$$
This implies
$$
\omega_0^{1-p} > (p-1)\int_0^\infty \frac{\alpha(\xi)}{\nu^{p-1}(\xi)}d\xi.
$$
Thus, if $g(0)>0$ is sufficiently small, then we have
$$
\big[g(0)+\omega_0\big]^{1-p} > (p-1)\int_0^\infty \frac{\alpha(\xi)}{\nu^{p-1}(\xi)}d\xi,\qquad p>1.
$$
Therefore, the function
$$
\frac{1}{(g(0) + \omega)^{1-p} - (p-1) \int_0^t \frac{\alpha(\xi)}{\nu^{p-1}(\xi)}\, d\xi}
$$
which appears in the right-hand side of \eqref{eqx20} 
is well-defined for all $t\ge 0$ when $\omega = \omega_0$ and $g(0)>0$ is sufficiently small. 
From the remarks above and the proof of Theorem \ref{thm2.3} we have the following corrollary
\begin{corrollary}
\label{corrollary2.5}
Assume that 
\begin{equation}
\label{}
\sup_{t\to\infty} \int_0^t \gamma(\xi)\, d\xi < \infty,
\end{equation}
\begin{equation}
\label{}
\frac{\beta(t)\nu^p(t)}{\alpha(t)} \le C< C_1:=\bigg(\frac{1}{(p-1)\int_0^\infty \frac{\alpha(\xi)}{\nu^{p-1}(\xi)}d\xi}\bigg)^{\frac{p}{p-1}},\qquad 
\nu(t) = e^{-\int_0^t \gamma(\xi)\, d\xi}. 
\end{equation}
If $\|u_0\|$ is small such that
$$ 
\bigg(\|u_0\| + C^{\frac{1}{p}}\bigg)^p < C_1,
$$
then the solution $u(t)$ to problem \eqref{eq1} exists globally, is bounded, and satisfies
\begin{equation}
\|u(t)\| \le C_2 e^{\int_0^t \gamma(\xi)\,d\xi},\qquad t\ge 0,\qquad C_2 = const>0.
\end{equation}
In addition, if
\begin{equation*}
\label{}
\lim_{t\to\infty}\int_0^t \gamma(\xi)\, d\xi = -\infty,
\end{equation*}
then 
\begin{equation}
\label{}
\lim_{t\to\infty} u(t) = 0.
\end{equation}
\end{corrollary}

\begin{theorem}
\label{thm2.4}
Assume that $g(0)=\|u(0)\|\not = 0$ 
and that $\alpha(t)\ge 0$ satisfies the inequality
\begin{equation}
\label{eqs9}
\alpha(t) \le \frac{(q-1)\beta(t)}{(q\zeta(t))^p},\qquad t\ge 0,\quad q>1,
\end{equation}
where
\begin{equation}
\label{eq22}
\zeta(t) := \frac{g(0)}{\nu(t)} + \frac{\int_0^t \beta(\xi)\nu(\xi)\, d\xi}{\nu(t)},\qquad \nu(t) = e^{-\int_0^t \gamma(\xi)\, d\xi}.  
\end{equation}
Then the solution $u(t)$ to problem \eqref{eq1} exists globally and 
\begin{equation}
\label{eq23bn}
\|u(t)\| < q\zeta(t),\qquad  \forall t\ge 0.
\end{equation}
In addition; 
\begin{enumerate}
\item[(a)]{
If the function $\zeta(t)$ is bounded on $[0,\infty)$, 
then the solution $u(t)$ to problem \eqref{eq1} is bounded.}
\item[(b)]{
If $\lim_{t\to\infty}\zeta(t) = 0$, 
then 
\begin{equation}
\label{eq20}
\lim_{t\to\infty} u(t) = 0.
\end{equation}}
\end{enumerate}
\end{theorem}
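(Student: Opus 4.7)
The plan is to run a continuity/bootstrap argument on the ratio $g(t)/\zeta(t)$, essentially the same style as in the proof of Theorem~\ref{thm2.1}, but comparing $g$ against the solution $\zeta$ of the associated linear equation rather than against a constant. The first preparatory observation is that $\zeta(t)$ satisfies the linear IVP $\dot\zeta = \gamma(t)\zeta + \beta(t)$, $\zeta(0)=g(0)$, since $\nu(t)\zeta(t) = g(0)+\int_0^t\beta(\xi)\nu(\xi)\,d\xi$ and $\dot\nu=-\gamma\nu$. Because $g(0)>0$ and $\beta,\nu\ge 0$, we have $\zeta(t)\ge g(0)/\nu(t)>0$ for all $t\ge 0$, so the hypothesis \eqref{eqs9} makes sense.

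Next I would set $T:=\sup\{\,t\ge 0 : g(s)<q\zeta(s)\text{ on }[0,t]\,\}$, taken over the maximal interval of existence. Since $g(0)=\zeta(0)<q\zeta(0)$ and both sides are continuous, $T>0$. Suppose for contradiction that $T$ is finite and in the interval of existence; by continuity $g(T)=q\zeta(T)$. On $[0,T]$, $g^p\le (q\zeta)^p$, and \eqref{eqs9} gives
\[
\alpha(t)g^p(t)\le \alpha(t)(q\zeta(t))^p\le (q-1)\beta(t).
\]
Substituting this into \eqref{eq2} yields $\dot g\le \gamma(t)g+q\beta(t)$ on $[0,T]$.

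The clean way to finish is to compare $g$ with $q\zeta$ directly. Put $h(t):=g(t)-q\zeta(t)$; using $\dot\zeta=\gamma\zeta+\beta$, I get
\[
\dot h\le \gamma g+q\beta-q(\gamma\zeta+\beta)=\gamma h,
\]
hence $\frac{d}{dt}(h\nu)\le 0$. Integrating from $0$ to $t$ and using $h(0)=(1-q)g(0)<0$,
\[
h(t)\le \frac{(1-q)g(0)}{\nu(t)}<0,\qquad 0\le t\le T,
\]
so $g(t)<q\zeta(t)$ \emph{strictly} on $[0,T]$, contradicting $g(T)=q\zeta(T)$. Therefore $T$ equals the right endpoint of the maximal interval, and \eqref{eq23bn} holds there. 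Global existence is then automatic: on any compact subinterval $\zeta$ is continuous and thus $g$ is bounded, so Assumption~A rules out finite-time blow-up exactly as in the proof of Theorem~\ref{thm2.3}, giving $T=\infty$. Parts~(a) and~(b) are immediate from $0\le \|u(t)\|=g(t)<q\zeta(t)$.

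The only delicate point I expect is the strictness of \eqref{eq23bn} at $t=T$; this is why I prefer the $h\nu$ monotonicity argument (which automatically produces a strict gap proportional to $(q-1)g(0)/\nu(t)$) over just invoking a Gronwall-type bound, and why the hypothesis $g(0)\neq 0$ is essential—without it, $h(0)=0$ and the contradiction collapses.
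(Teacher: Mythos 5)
Your proposal is correct and follows essentially the same route as the paper: a continuity/bootstrap argument comparing $g$ with $q\zeta$, using \eqref{eqs9} to reduce \eqref{eq2} to $\dot g\le \gamma g+q\beta$, with the strict gap coming from $g(0)<qg(0)$ (the paper integrates $\frac{d}{dt}(g\nu)\le q\beta\nu$ directly and compares with $q\zeta\nu = qg(0)+q\int_0^t\beta\nu$, which is algebraically identical to your $h\nu$ monotonicity step). Your explicit identification of $\zeta$ as the solution of $\dot\zeta=\gamma\zeta+\beta$ is a clean way to package the same computation, and your remark on where $g(0)\neq 0$ enters matches the paper's use of it.
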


\begin{proof}
Recall from our earlier assumptions that $\alpha(t),\beta(t)$, and $\gamma(t)$ are in $L^1_{loc}([0,\infty))$ and $\alpha(t)\ge 0$, $\beta(t)\ge 0$, $t\ge 0$. Thus, the integrals 
$\int_0^t \gamma(\xi)\, d\xi$ and $\int_0^t \beta(\xi)\nu(\xi)\, d\xi$ are well-defined for all $t\ge 0$ and $\nu(t)>0$, $\forall t\ge 0$. Therefore, the function $\zeta(t)$ is well-defined on $[0,\infty)$. 

{\it Let us prove that the solution $u(t)$ to problem \eqref{eq1} exists globally.} 
Assume the contrary that the maximal interval of existence of $u(t)$ is $[0,T)$ where $0<T<\infty$. 
Let us first prove that
\begin{equation}
\label{eq23}
g(t) = \|u(t)\| < q\zeta(t),\qquad 0\le t< T.
\end{equation}
Since $\nu(0) = 1$, it follows from \eqref{eq22} with $t=0$ that $g(0)= \zeta(0)<q\zeta(0)$. This and the continuity of $g(t)$ and $\zeta(t)$ imply that there exists $\theta>0$ such that $g(t)< q\zeta(t)$, $\forall t\in [0, \theta]$. 
Let $T_1\in (0,T]$ be the largest real number such that
\begin{equation}
\label{eq24}
g(t) < q\zeta(t),\qquad \forall t \in [0,T_1).
\end{equation}
Let us prove that $T_1= T$. Assume the contrary. Then $0<T_1<T$. From the continuity of $g(t)$ and the definition of $T_1$, one has 
\begin{equation}
\label{eqa9}
g(T_1) = q\zeta(T_1),\qquad g(t)<q\zeta(t),\qquad  0\le t<T_1. 
\end{equation}

Inequalities \eqref{eq2}, \eqref{eqs9}, and \eqref{eq24} imply
\begin{equation}
\label{}
\begin{split}
\dot{g} &\le \gamma(t)g(t) + \alpha(t)(q\zeta(t))^p + \beta(t) \\
&\le \gamma(t)g(t) + (q-1)\beta(t) +\beta(t) = \gamma(t)g(t) + q\beta(t),\qquad \forall t\in [0,T_1].
\end{split}
\end{equation}
This implies
\begin{equation}
\label{}
\frac{d}{dt} (g(t)\nu(t)) \le  q\beta(t)\nu(t),\quad t\in [0,T_1],\qquad \nu(t) = e^{-\int_0^t \gamma(\xi)\, d\xi}.
\end{equation}
Integrate this inequality from 0 to $t$ to get
\begin{equation}
\label{}
g(t)\nu(t) - g(0)\nu(0) \le q\int_0^t \beta(\xi)\nu(\xi)\, d\xi,\qquad t\in (0,T_1].
\end{equation}
Thus,
\begin{equation}
\label{eq28}
\begin{split}
g(t) \le \frac{g(0)}{\nu(t)} + \frac{q\int_0^t \beta(\xi)\nu(\xi)\, d\xi}{\nu(t)} 
< q\bigg(\frac{g(0)}{\nu(t)} +\frac{\int_0^t \beta(\xi)\nu(\xi)\, d\xi}{\nu(t)} \bigg) = q\zeta(t),\qquad t\in (0,T_1].
\end{split}
\end{equation}
Inequality \eqref{eq28} for $t=T_1$ is $g(T_1)<q\zeta(T_1)$ which contradicts to the first equality in \eqref{eqa9}. This contradiction implies that $T_1=T$, i.e., inequality \eqref{eq23} holds. 

Inequality \eqref{eq23} implies that $\|u(t)\|$ is finite on the interval $[0,T]$.
Thus, by using Assumption A with $t_0=T$ one can extend the solution $u(t)$ to a large interval. In other words, there exists $\delta>0$ so that the solution $u(t)$ 
to equation \eqref{eq1} exists on $[0,T+\delta]$. This contradicts the definition of $T$. The contradiction implies that $T=\infty$, i.e., the solution $u(t)$ to equation \eqref{eq1} exists globally. 

Inequality \eqref{eq23bn} follows from inequality \eqref{eq23} when $T=\infty$. 
It follows directly from inequality \eqref{eq23bn} that if $\zeta(t)$ is bounded on $[0,\infty)$, then the solution $u(t)$ to equation \eqref{eq1} is bounded and that if $\lim_{t\to\infty}\zeta(t) = 0$, then $\lim_{t\to\infty}u(t) = 0$. 
 Theorem \ref{thm2.4} is proved.  
\end{proof}

A consequence of Theorem \ref{thm2.4} is the following result.

\begin{theorem}
\label{thm2.7}
Assume that $g(0)=\|u(0)\|\not = 0$
and that $\alpha(t)\ge 0$ satisfies the inequality
\begin{equation}
\label{eqs9s}
\alpha(t) \le \frac{(q-1)\beta(t)}{(q\zeta(t))^p},\qquad t\ge 0,\quad q>1,
\end{equation}
where
\begin{equation}
\label{eq22s}
\zeta(t) = \frac{g(0)}{\nu(t)} + \frac{\int_0^t \beta(\xi)\nu(\xi)\, d\xi}{\nu(t)},\qquad \nu(t) = e^{-\int_0^t \gamma(\xi)\, d\xi}.  
\end{equation}
Then the solution $u(t)$ to problem \eqref{eq1} exists globally.

In addition; 
\begin{enumerate}
\item[(a)]{
If
\begin{equation}
\label{eqs30s}
L:=\sup_{t\ge 0}\bigg|\int_0^t \gamma(\xi)\, d\xi\bigg| < \infty,  \qquad  \int_0^\infty \beta(t)\, dt <\infty,
\end{equation}
then the solution $u(t)$ to problem \eqref{eq1} is bounded.}
\item[(b)]{
If 
\begin{equation}
\label{eqs4s}
\lim_{t\to\infty}\int_0^t \gamma(\xi)\, d\xi = -\infty,\qquad \lim_{t\to\infty}\frac{\beta(t)}{\gamma(t)} = 0,
\end{equation}
then 
\begin{equation}
\label{eq20s}
\lim_{t\to\infty} u(t) = 0.
\end{equation}}
\end{enumerate}
\end{theorem}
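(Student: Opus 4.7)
The plan is to invoke Theorem \ref{thm2.4} directly. The hypotheses \eqref{eqs9s}--\eqref{eq22s} are literally the same as \eqref{eqs9}--\eqref{eq22}, so that theorem immediately gives global existence of $u(t)$ together with the bound $\|u(t)\| < q\zeta(t)$ for all $t\ge 0$. It then suffices to verify, under the additional hypotheses of parts (a) and (b), the boundedness of $\zeta$ and the decay $\zeta(t)\to 0$ respectively, and to read the conclusions of Theorem \ref{thm2.4}(a) and (b) off from those.

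For part (a), I would first observe that \eqref{eqs30s} yields $e^{-L}\le \nu(t)\le e^L$ for all $t\ge 0$, so that $g(0)/\nu(t)\le g(0)e^L$. Bounding $\nu(\xi)\le e^L$ and $1/\nu(t)\le e^L$ in the integral term of $\zeta$ yields
$$
\frac{\int_0^t \beta(\xi)\nu(\xi)\,d\xi}{\nu(t)} \le e^{2L}\int_0^\infty \beta(\xi)\,d\xi <\infty.
$$
Thus $\zeta$ is bounded on $[0,\infty)$, and Theorem \ref{thm2.4}(a) delivers the desired boundedness of $u(t)$.

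For part (b), since $\int_0^t\gamma(\xi)\,d\xi\to-\infty$, we have $\nu(t)\to\infty$, so $g(0)/\nu(t)\to 0$. To handle the integral term I would split into two cases. If $\int_0^\infty \beta(\xi)\nu(\xi)\,d\xi<\infty$, the numerator is bounded while the denominator tends to infinity, so the ratio tends to $0$. Otherwise, I apply L'H\^opital's rule; since $\nu'(t) = -\gamma(t)\nu(t)$,
$$
\lim_{t\to\infty}\frac{\int_0^t \beta(\xi)\nu(\xi)\,d\xi}{\nu(t)} = \lim_{t\to\infty}\frac{\beta(t)\nu(t)}{-\gamma(t)\nu(t)} = \lim_{t\to\infty}\frac{\beta(t)}{-\gamma(t)} = 0
$$
by the second condition in \eqref{eqs4s}. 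Combining both pieces gives $\zeta(t)\to 0$, and Theorem \ref{thm2.4}(b) then yields $u(t)\to 0$.

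The delicate point is the L'H\^opital step: it requires $\nu$ to be eventually strictly monotone, i.e., $\gamma$ to have a consistent sign for all large $t$, which is not assumed outright. The natural reading of $\lim_{t\to\infty}\beta(t)/\gamma(t)=0$ is that $\gamma(t)\ne 0$ for large $t$; combined with $\int_0^t\gamma\to-\infty$, this forces $\gamma$ to be eventually negative, after which the L'H\^opital application is legitimate. If one insists on allowing $\gamma$ to change sign, one can instead write $\beta(t) = \varepsilon(t)|\gamma(t)|$ with $\varepsilon(t)\to 0$ and estimate $\int_0^t\beta(\xi)\nu(\xi)\,d\xi$ directly via the identity $(\nu)' = -\gamma\nu$ and an integration-by-parts/splitting argument over a threshold $T_\varepsilon$ past which $\varepsilon<\varepsilon_0$; this is the step I expect to require the most care.
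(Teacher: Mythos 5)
Your proposal is correct and follows essentially the same route as the paper's own proof: reduce to Theorem \ref{thm2.4} via the bound $\|u(t)\|< q\zeta(t)$, show $\zeta$ is bounded using $e^{-L}\le\nu(t)\le e^{L}$ for part (a), and for part (b) split on whether $\int_0^\infty\beta\nu\,d\xi$ is finite, using L'H\^opital with $\nu'=-\gamma\nu$ in the divergent case. Your closing caveat about the L'H\^opital step requiring $\gamma$ to be eventually of one sign is a legitimate point that the paper itself glosses over, and your proposed repair is reasonable.
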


\begin{proof}
{\it Let us proved that $u(t)$ is bounded assuming that inequality \eqref{eqs30s} holds.} 
From Theorem \ref{thm2.4}, it suffices to show that the function $\zeta(t)$ is bounded.
From the first inequality in \eqref{eqs30s}, one gets
\begin{equation}
\label{eqs35}
e^{-L}\le \nu(t) = e^{-\int_0^t \gamma(\xi)\, d\xi} \le e^{L},\qquad t\ge 0.
\end{equation}
Thus,
\begin{equation}
\label{eqs36}
0\le \int_0^\infty \beta(\xi)\nu(\xi)\, d\xi \le e^L \int_0^\infty \beta(\xi)\, d\xi < \infty.
\end{equation}
It follows from \eqref{eqs35} and \eqref{eqs36} that 
$$
\frac{g(0)}{\nu(t)} + \frac{\int_0^t \beta(\xi)\nu(\xi)\, d\xi}{\nu(t)} \le g(0)e^L + e^{2L} \int_0^\infty \beta(\xi)\, d\xi < \infty,\qquad \forall t\ge 0.
$$
Therefore, the function $\zeta(t)$ defined in \eqref{eq22} is bounded. 
Thus, $u(t)$ is bounded as a consequence of inequality \eqref{eq23bn}.

{\it Let us prove \eqref{eq20s} given that inequality \eqref{eqs4s} holds.} 
It follows from the first relation in \eqref{eqs4s} that $\lim_{t\to\infty} \nu(t) = \lim_{t\to\infty} e^{-\int_0^t \gamma(\xi)\, d\xi} = \infty$.  
We claim that 
\begin{equation}
\label{eq21}
\lim_{t\to \infty} \frac{\int_0^t \beta(\xi)\nu(\xi)\, d\xi}{\nu(t)} = 0.  
\end{equation}
Indeed, if $\int_0^\infty \beta(\xi)\nu(\xi)\, d\xi < \infty$, then $\int_0^t \beta(\xi)\nu(\xi)\, d\xi$ is bounded on $\mathbb{R}_+$ and relation \eqref{eq21} follows from the relation $\lim_{t\to\infty} \nu(t) = \infty$. If $\int_0^\infty \beta(\xi)\nu(\xi)\, d\xi = \infty$, then relation \eqref{eq21} follows from L'Hospital's rule 
and the relation $\lim_{t\to\infty}\frac{\beta(t)}{\gamma(t)} = 0$ (cf. \eqref{eqs4s}). 

From \eqref{eq28} one gets
\begin{equation}
\label{eqa10}
0 \le g(t)\le \frac{g(0)}{\nu(t)} +\frac{q\int_0^t \beta(\xi)\nu(\xi)\, d\xi}{\nu(t)}, \qquad \forall t\ge 0.
\end{equation}
Relation \eqref{eq21}, the relation $\lim_{t\to\infty}\nu(t) = \infty$, and inequality \eqref{eqa10} imply that
$\lim_{t\to\infty} g(t) = 0$. Thus, relation \eqref{eq20s} holds. Theorem \ref{thm2.7} is proved. 
\end{proof}

\section{Acknowledgement}
The author thanks the referee for useful comments which improve the paper.

\end{document}